\date{}
 \newif\ifNoRemark
    \def\addtheorem#1#2#3#4{ 
    \ifthenelse{\expandafter\isundefined\csname the#2\endcsname}{\newcounter{#2}}{}
    \newenvironment{#1}[1][\global\NoRemarktrue]
     {\par\addvspace{2mm}\noindent
       \refstepcounter{#2}{\bf #3~\csname the#2\endcsname
      \vphantom{##1}\ifNoRemark.\ \else\ (##1).\fi}\begingroup #4}%
     {\endgroup\par\addvspace{1mm}\global\NoRemarkfalse}
    \expandafter\newcommand\csname b#1\endcsname{\begin{#1}}
    \expandafter\newcommand\csname e#1\endcsname{\end{#1}}
    }
\newtheorem{theorem}{Theorem}
\newtheorem{lemma}{Lemma}
\newtheorem{proposition}{Proposition}
\newtheorem{corollary}{Corollary}
\def\bproof{\textit{Proof.}\ }
\def\proofend{$\square$\vspace{0.3em}\par}
\begin{document}


\title{On extremal properties of perfect $2$-colorings
}

\author{Vladimir N. Potapov\\
 Sobolev Institute of Mathematics\\
{vpotapov@math.nsc.ru}}

\maketitle

\begin{abstract}
A coloring of the vertices of a graph is called perfect if, for
every vertex, the collection of colors of its neighbors depends only
on its own color. The corresponding partition of the vertex set is
called equitable. We note that a collection of bounds (Hoffman
bound, Haemers bound, Cheeger bound, Bierbrauer--Friedman  bound,
etc) is only reached on perfect $2$-colorings.  We show that the
Expander Mixing Lemma is another example of an inequality that is
related to a perfect $2$-coloring. For an amply regular graph
$G=(V,G)$, we prove a new upper bound for the size of a subset
$S\subset V$ with the fixed average internal degree. This bound is
reached on a set $S$ if and only if $\{ S, V\setminus S\}$ is an
equitable partition. We improve the Hoffman bound in a special case.
\end{abstract}

Keywords: equitable partition, perfect coloring, amply regular
graph, tight bound, expander mixing lemma, sensitivity of Boolean
function

 MSC[2020] 05C35, 05B30, 05E30



\section{Introduction}

The concept of a perfect coloring of a graph arises independently in
graph theory, algebraic combinatorics, cryptography, and coding
theory. A coloring of vertices of a graph is called perfect if, for
every vertex, the collection of colors of its neighbors depends only
on its own color. Other terms used for this notion in the literature
are ``equitable partition'', ``partition design'', ``distributive
coloring'', ``intriguing set'', ``extremal graphical designs'',  and
``regular boolean function''. The last three terms are  used only
for perfect $2$-colorings. Moreover,  perfect $2$-colorings of a
regular graph are the same as the completely regular codes  with
covering radius $1$. A comprehensive survey of the theory of perfect
colorings and related topics is available in \cite{CRC}.

It is very useful to formulate the definition of a perfect coloring
in  terms of linear algebra (see \cite{Martin}). A surjective
function $f:V\to \{0,\dots,k-1\}$ is a perfect $k$-coloring of a
graph $\Gamma=(V,E)$ with quotient  matrix~$Q$ of size $k\times k$
if and only if
\begin{equation}\label{1eq:AFFS}
 M\cdot F=F\cdot Q,
\end{equation}
where $M$ is the $|V|\times|V|$ adjacency matrix of~$\Gamma$ and $F$
is the incidence   $\{0,1\}$-matrix of size $|V|\times k$ for~$f$ .
If we admit rational matrices $F$, then each $F$ satisfying equation
(\ref{1eq:AFFS}) is called a perfect structure (see \cite{ATar}).

Some years ago, Avgustinovich \cite{Avg} postulated the following
principle. Each perfect coloring is a solution of some optimization
problem and on the other hand, a tight (reached a theoretical bound)
solution of any optimization problem in combinatorics is equivalent
to a perfect coloring with some parameters that depend on the
problem. The first part of this principle is not difficult. Such a
problem can be formulated as Proposition \ref{Hamming} or as
\cite[Lemma 3]{Krotov12}. Previously, we collected many examples
illustrating  the second part of this principle in \cite{PotAv}. In
this paper, we consider a number of other examples.

Indeed, perfect $2$-colorings turn out to be the tight solutions of
some extremal problems. If we consider the following bounds for the
number of vertices, then the set of vertices attaining one of these
bounds must be a perfect $2$-coloring. This property holds in the
cases of the Hamming and Singleton bounds in coding theory, the
Hoffman bound on independent sets \cite{Hoff}, Cheeger's bound on
cut sizes \cite{AlonCh}, \cite{Golubev}, Haemers' bounds on the
subgraph degree \cite{Haemers}, the Fon-der-Flaass \cite{FdF2} and
Bierbrauer--Friedman bounds on orthogonal arrays (see a proof of the
Bierbrauer--Friedman bound in \cite{Bier}, \cite{Friedman} and a
proof of the property to be a perfect $2$-coloring in \cite{OPPh},
\cite{Pot12}). Binary orthogonal arrays attaining the other bound
are related to perfect $3$-colorings~\cite[Theorem~1]{Krotov1536}.
In this paper we prove that the Expander Mixing Lemma is another
example of an inequality whose attainment implies a perfect
$2$-coloring (Lemma~\ref{EMLclaim}).

 It is possible to consider $q$-ary
Boolean-valued function in $k$ variables as a $2$-coloring of the
Hamming graph $H(k,q)$. Then the sensitivity of the function is
equal to the number of mixed-colored edges in the graph. We prove
that some known bounds on the sensitivity follow from the Expander
Mixing Lemma. Moreover, they are only reached on perfect
$2$-colorings (Corollaries \ref{cut12}, \ref{cut13}). A bound for a
partition into several subsets of vertices with the fixed average
internal degree was proved by Krotov \cite[Sect.~3]{Krotov12}. This
bound is tight and it is only reached on perfect colorings. We note
that the tight solutions of extremal problems are a rich source of
combinatorial designs. The above connection can be useful for
optimization theory and theory combinatorial structures.

Distance-regular graphs are one of the main topic of algebraic graph
theory. There are many theorems for distance-regular graphs that are
impossible for arbitrary regular graphs (see, e.g., \cite{BCN}).
Amply regular graphs (regular up to distance $2$) occupy an
intermediate position between regular and distance-regular graphs.
Thus,  some stronger results can be obtained for them. We prove a
new upper bound for the cardinality of a subset of vertices of an
amply regular graph with the fixed average internal degree
(Theorem~\ref{thEPPC}). The bound is tight and attained only  on
perfect $2$-colorings. Moreover, any perfect $2$-coloring of an
amply regular graph attains this bound. This bound is the better
than the Hoffman bound in some cases (Corollary~\ref{corExt2}).

\section{Preliminaries}

Next we formulate a combinatorial definition of a perfect coloring.
Let $G=(V,E)$ be a regular graph, where $V$ is the set of vertices
and $E$ the set of edges. Throughout the article we denote by
$n=|V|$ the number of vertices of $G$ and denote by $M$ the
adjacency matrix of $G$. A function $f:V\rightarrow I$ is called a
perfect coloring if there are integers $q_{ij}$, $i,j\in I$, such
that every vertex of $C_i=f^{-1}(i)$ is adjacent to $q_{ij}$
vertices of $C_j=f^{-1}(j)$. In this case the partition $\{C_i :
i\in I\}$ is called equitable. We will use the both equivalent terms
``perfect coloring'' and ``equitable partition''.

It is well known that every eigenvalue of a perfect coloring
(i.\,e., an eigenvalue of the quotient matrix $Q=(q_{ij})$) is an
eigenvalue of the adjacency matrix~$M$. For an $r$-regular graph,
the largest (both as a signed value and in absolute value)
eigenvalue is equal to~$r$ and it coincides with the largest
eigenvalue of the quotient matrix of any perfect coloring.

The following connection between perfect $2$-colorings and
eigenfunctions of a graph is known.

\begin{proposition}[\cite{FdF2}]\label{corol5}
A two valued function $f:V\rightarrow \mathbb{R}$ is a perfect
$2$-coloring of a regular connected graph $G(V,E)$ if and only if
there exists a constant~$\gamma$ such that $f-\gamma{\mathbf 1}_V$
is an eigenfunction of~$G$ with eigenvalue $\lambda$, where
$\lambda$ is an eigenvalue of the corresponding quotient matrix.
\end{proposition}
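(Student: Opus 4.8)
The plan is to show that both properties in the statement are equivalent to the single algebraic condition that $M(G)f$ lies in the two-dimensional subspace $\mathrm{span}\{f,\mathbf 1_V\}$. Write $a\neq b$ for the two values of $f$, set $C_0=f^{-1}(a)$, $C_1=f^{-1}(b)$, and recall that for the $r$-regular graph $G$ one has $M(G)\mathbf 1_V=r\mathbf 1_V$ and $(M(G)f)(v)=\sum_{u\sim v}f(u)$.

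First I would prove a bridge lemma: for a two-valued $f$, being a perfect $2$-coloring is equivalent to $M(G)f$ being constant on $C_0$ and on $C_1$. For the forward implication, if every vertex of $C_i$ has exactly $q_{ij}$ neighbours in $C_j$, then $(M(G)f)(v)=q_{i0}a+q_{i1}b$ for $v\in C_i$, which depends only on $i$. Conversely, for $v\in C_0$ with $n(v)$ neighbours in $C_0$ one gets $(M(G)f)(v)=rb+n(v)(a-b)$ by $r$-regularity; since $a-b\neq0$, constancy of $M(G)f$ on $C_0$ forces $n(v)$ to equal the same integer $q_{00}$ for all $v\in C_0$, with $q_{01}=r-q_{00}$, and $C_1$ is symmetric, yielding the integer quotient matrix. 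Because $f$ takes exactly two distinct values, $f$ and $\mathbf 1_V$ are linearly independent and hence form a basis of the two-dimensional space of functions that are constant on each colour class; so ``$M(G)f$ constant on each class'' is the same as $M(G)f=\alpha f+\beta\mathbf 1_V$ for some reals $\alpha,\beta$.

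Next I would match this with the shift condition. Setting $g=f-c\mathbf 1_V$ and using $M(G)\mathbf 1_V=r\mathbf 1_V$, the equation $M(G)g=\lambda g$ is equivalent to $M(G)f=\lambda f+c(r-\lambda)\mathbf 1_V$, i.e. exactly $M(G)f\in\mathrm{span}\{f,\mathbf 1_V\}$ with $\alpha=\lambda$ and $\beta=c(r-\lambda)$. Thus from a perfect $2$-coloring I obtain such $\alpha,\beta$ and want to solve $\lambda=\alpha$, $c=\beta/(r-\alpha)$, while a shift directly gives the decomposition and hence, by the bridge lemma, a perfect $2$-coloring. The reverse direction also needs $g\neq0$, which is automatic since $f$ is nonconstant.

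The main obstacle is the degenerate case $\alpha=r$, where $c$ cannot be solved for; here connectedness enters. I would show $\alpha=r$ is impossible for a genuinely two-valued $f$: pairing $M(G)f=rf+\beta\mathbf 1_V$ with $\mathbf 1_V$ and using symmetry of $M(G)$ gives $r\langle f,\mathbf 1_V\rangle=r\langle f,\mathbf 1_V\rangle+\beta|V|$, so $\beta=0$; then $f$ would be an eigenfunction for the eigenvalue $r$, which for a connected $r$-regular graph is simple with eigenspace $\mathrm{span}\{\mathbf 1_V\}$ by Perron--Frobenius, forcing $f$ constant and contradicting that it takes two values. Hence $\alpha\neq r$ always, the constant $c=\beta/(r-\alpha)$ exists, and the two implications close up.
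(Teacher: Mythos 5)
Your proof is correct, but there is nothing in the paper to compare it against: Proposition \ref{corol5} is stated with a citation to \cite{FdF2} and no proof is included (the paper only proves the subsequent Proposition \ref{propEP}, which computes the quotient matrix taking the present result as given). Judged on its own merits, your argument is complete and is the natural route to this fact. The bridge lemma --- $f$ is a perfect $2$-coloring iff $M(G)f$ is constant on each level set --- is proved correctly in both directions (regularity is what lets you write the number of neighbours in $C_1$ as $r-n(v)$), and since $f$ and $\mathbf{1}_V$ form a basis of the $2$-dimensional space of functions constant on both classes, this becomes $M(G)f=\alpha f+\beta\mathbf{1}_V$; the translation to ``$f-c\mathbf{1}_V$ is an eigenfunction'' is then the elementary computation you give. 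Crucially, you identify and close the one genuine gap such arguments can have: if $\alpha=r$ the constant $c=\beta/(r-\alpha)$ does not exist, and you rule this case out by pairing with $\mathbf{1}_V$ to get $\beta=0$ and then invoking the Perron--Frobenius simplicity of the eigenvalue $r$ for a connected regular graph --- exactly the point where the connectedness hypothesis enters. As a by-product your proof exhibits the eigenvalue explicitly as $\alpha=q_{00}+q_{11}-r$, which is consistent with the value $\lambda=r-b-c$ appearing in the proof of Proposition \ref{propEP}. No gaps.
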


Here ${\mathbf 1}_V$ is the indicator function of $V$. The following
two  propositions is well known. We prove it for the completeness.

\begin{proposition}\label{propEP}
Let $G=(V,E)$ be an $r$-regular connected graph with $n$ vertices.
Suppose a partition $\{A, B\}$, $ B=V\setminus A$, is equitable,
i.\,e., for some $\gamma>0$ the function
$f={\mathbf{1}}_A-\gamma{\mathbf{1}}_V$ is an eigenfunction with an
eigenvalue $\lambda\neq r$. Then the
 quotient matrix of the equitable partition is equal
to $\begin{pmatrix}
\frac{r|A|+\lambda|B|}{n} & \frac{(r-\lambda)|B|}{n}\\
\frac{(r-\lambda)|A|}{n} & \frac{r|B|+\lambda|A|}{n}
\end{pmatrix}$.
\end{proposition}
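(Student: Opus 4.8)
The plan is to read off the four entries of the quotient matrix directly from the eigenfunction equation $Mf=\lambda f$, where $M=M(G)$ is the adjacency matrix of $G$. Write $a=|A|$ and $b=|B|$, so that $a+b=n$, and denote the unknown entries by $Q=\begin{pmatrix} q_{AA} & q_{AB}\\ q_{BA} & q_{BB}\end{pmatrix}$, where $q_{AA}$ is the number of $A$-neighbors of a vertex in $A$, $q_{AB}$ the number of its $B$-neighbors, and similarly for the second row. Since $G$ is $r$-regular, each row of $Q$ sums to $r$, i.e. $q_{AA}+q_{AB}=r$ and $q_{BA}+q_{BB}=r$. The key observation is that the value of $M{\bf 1}_A$ at a vertex $v$ simply counts the neighbors of $v$ lying in $A$, so $M{\bf 1}_A$ equals $q_{AA}$ on $A$ and $q_{BA}$ on $B$; moreover $M{\bf 1}_V=r{\bf 1}_V$ by regularity.

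First I would expand $Mf=M{\bf 1}_A-cM{\bf 1}_V=M{\bf 1}_A-cr{\bf 1}_V$ and set it equal to $\lambda f=\lambda{\bf 1}_A-\lambda c{\bf 1}_V$. Evaluating this identity on $A$ gives $q_{AA}-cr=\lambda-\lambda c$, and evaluating it on $B$ gives $q_{BA}-cr=-\lambda c$; hence $q_{AA}=\lambda+c(r-\lambda)$ and $q_{BA}=c(r-\lambda)$. To pin down the still-free constant $c$, I would invoke the hypothesis $\lambda\neq r$: since $M$ is symmetric and ${\bf 1}_V$ is the eigenfunction for the eigenvalue $r$, the eigenfunction $f$ belonging to the distinct eigenvalue $\lambda$ must be orthogonal to ${\bf 1}_V$. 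Computing $\langle f,{\bf 1}_V\rangle=|A|-cn=a-cn=0$ then yields $c=a/n=|A|/n$.

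Substituting $c=|A|/n$ into the two expressions above immediately gives $q_{BA}=\frac{(r-\lambda)|A|}{n}$ and, using $\lambda n-\lambda a=\lambda b$, also $q_{AA}=\lambda+\frac{a(r-\lambda)}{n}=\frac{r|A|+\lambda|B|}{n}$. The remaining two entries follow from the row-sum constraints: $q_{AB}=r-q_{AA}=\frac{(r-\lambda)|B|}{n}$ and $q_{BB}=r-q_{BA}=\frac{r|B|+\lambda|A|}{n}$. This is exactly the claimed matrix. The only genuinely nonroutine step is the determination of $c$, which is forced by orthogonality of eigenfunctions belonging to distinct eigenvalues and is precisely where the assumption $\lambda\neq r$ is used; everything else is direct substitution. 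As a consistency check one may verify the edge-counting identity $a\,q_{AB}=b\,q_{BA}$, since both sides equal $\frac{(r-\lambda)ab}{n}$, which confirms the off-diagonal entries.
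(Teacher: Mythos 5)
Your proof is correct, but it takes a genuinely different route from the paper's. Rendered in your notation, the paper's proof works entirely at the level of the quotient matrix: it obtains $|A|\,q_{AB}=|B|\,q_{BA}$ by double counting the edges between $A$ and $B$, observes that the eigenvalues of the quotient matrix are $r$ and $r-q_{AB}-q_{BA}$, so that $q_{AB}+q_{BA}=r-\lambda$, and then solves this small linear system; the eigenfunction $f$ and the constant $c$ are never used beyond supplying the value $\lambda$. You instead evaluate $Mf=\lambda f$ pointwise on $A$ and on $B$, and you pin down $c=|A|/n$ by orthogonality of eigenvectors of the symmetric matrix $M$ belonging to distinct eigenvalues; this is exactly where $\lambda\neq r$ enters your argument, whereas in the paper the same hypothesis is what (implicitly) identifies $\lambda$ with the second eigenvalue $r-q_{AB}-q_{BA}$ of the quotient matrix. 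Each version buys something: the paper's is purely combinatorial, needing only edge counting and the trace of a $2\times 2$ matrix; yours makes the role of $\lambda\neq r$ explicit, determines the normalization $c=|A|/n$ as a byproduct, and in passing re-proves one direction of Proposition~\ref{corol5}, since your computation shows $M{\bf 1}_A=\lambda{\bf 1}_A+c(r-\lambda){\bf 1}_V$, which is constant on each part, so the eigenfunction hypothesis alone already forces equitability. Finally, your closing consistency check $|A|\,q_{AB}=|B|\,q_{BA}$ is precisely the identity from which the paper's proof starts.
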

\bproof Without loss of generality, the quotient matrix of an
equitable partition is
  $Q=\begin{pmatrix}
r-b & b \\
c& r-c
\end{pmatrix}$ for some~$b$ and~$c$.
By double counting  the number of edges that connect elements of~$A$
with elements of~$B$, we obtain $b|A|=c|B|$. By Proposition
\ref{corol5} the second eigenvalue of~$Q$ equals~$\lambda=r-b-c$.
Therefore, $b=(r-\lambda)/(1+\frac{|A|}{|B|})$ and
$c=(r-\lambda)/(1+\frac{|B|}{|A|})$. Since $n=|A|+|B|$, the
proposition is proven. \proofend

The simplest extremal property of perfect $2$-colorings is the
following.

\begin{proposition}\label{Hamming} Let $G=(V,E)$ be an $r$-regular graph and let
$S\subset V$.
For a vertex~$x$ from~$S$, let~$a(x)$ denote the number of neighbors
of~$x$ in~$S$. We suppose $a(x)\leq a$ for some constant~$a$. For a
vertex~$y$ from~$V\setminus S$, let~$d(y)$ denote the number of
neighbors of~$y$ in~$V\setminus S$. We suppose $d(y)\geq d$ for some
constant~$d$. Then it holds
$$\frac{|S|}{n}\leq \frac{r-d}{2r-a-d}.$$ Moreover, in the case of
equality, ${\mathbf{1}}_S$ is a perfect $2$-coloring with the
quotient matrix
  $\begin{pmatrix}
a & r-a \\
r-d & d
\end{pmatrix}$.
\end{proposition}
\bproof Let $M$ be the adjacency matrix of~$G$. It is easy to see
that
\begin{equation}\label{PCe10}
(M{\mathbf{1}}_S,{\mathbf{1}}_S)\leq a|S|,\qquad   (M{\mathbf{1}}_S,
\mathbf{1}_V - {\mathbf{1}}_S)=({\mathbf{1}}_S, M(\mathbf{1}_V -
{\mathbf{1}}_S)) \leq (r-d)(n-|S|)
\end{equation}
 and $(M{\mathbf{1}}_S,\mathbf{1}_V)=r|S|$. Suppose that there exists
 $x\in S$ such that $a(x)< a$ or  there exists  $y\in V\setminus S$ such that
 $d(y)>d$. Then one of   inequalities~(\ref{PCe10}) should be strict.

 It holds  $
(M\mathbf{1}_S,\mathbf{1}_V)=(M\mathbf{1}_S, \mathbf{1}_V -
\mathbf{1}_S)+(M\mathbf{1}_S,\mathbf{1}_S)$.  Therefore, $r|S|\leq
(r-d)(n-|S|)+a|S|$, i.\,e., $\frac{|S|}{n}\leq \frac{r-d}{2r-d-a}$.
 If one of inequalities
(\ref{PCe10}) is strict, then $r|S|< (r-d)(n-|S|)+a|S|$, i.\,e.,
$\frac{|S|}{n}< \frac{r-d}{2r-d-a}$. Thus, in the case of the
equality we obtain  $a(x)= a$ for each $x\in S$ and $d(y)=d$ for
every $y\in V\setminus S$. \proofend

A vertex subset $S$ in an $r$-regular graph is called a $1$-perfect
code if the partition $\{S,V\setminus S\}$ is an equitable with
quotient matrix
  $\begin{pmatrix}
0 & r \\
1& r-1
\end{pmatrix}$. In
the case $a=0$ and $d=r-1$, Proposition~\ref{Hamming} corresponds to
the Hamming bound and it is a routine criterion
 for $1$-perfect codes.

Denote by $\lambda_{\mathrm{min}}$ the minimum eigenvalue of $M$.
The Hoffman (or Delsarte--Hoffman) upper bound~\cite{Hoff}
 on the cardinality of an
independent set in an $r$-regular graph~$G$   is equal to
$\frac{-\lambda_{\mathrm{min}}n|}{r-\lambda_{\mathrm{min}}}$. It is
well known (see, e.g.,~\cite{Golubev}) that if an independent set
$S$ attains the Hoffman bound, then partition $\{S, V\backslash S\}$
is  equitable. There exists a generalization of this fact to non
independent sets. Denote by~$\sigma(S)$ the average internal degree
for a set $S\subset V$, i.e., $\sigma(S)=(M{\mathbf{1}}_S,{
{\mathbf{1}}}_S)/|S|$.

\begin{theorem}[\cite{PotAv}]\label{thPotAv} Let $G=(V,E)$ be an
$r$-regular graph and let $S\subset V$. If  $\sigma(S)\leq a$, then
$|S|\leq
\frac{(a-\lambda_{\mathrm{min}})n}{r-\lambda_{\mathrm{min}}}$.
Moreover, $|S|=
\frac{(a-\lambda_{\mathrm{min}})n}{r-\lambda_{\mathrm{min}}}$ if and
only if ${\mathbf{1}}_S$
is a perfect $2$-coloring with quotient  matrix\\
$\begin{pmatrix}
a& r-a \\
a-\lambda_{\mathrm{min}} & r+\lambda_{\mathrm{min}}-a
\end{pmatrix}$.
\end{theorem}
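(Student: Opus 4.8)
The plan is to mimic the Hamming-bound argument of Proposition \ref{Hamming}, but with the crude pointwise inequality $(M{\bf 1}_S,{\bf 1}_S)\le a|S|$ replaced by an eigenvalue estimate. The key observation is that $\sigma(S)=(M{\bf 1}_S,{\bf 1}_S)/|S|$ measures the internal degree on average rather than pointwise, so I would bound the interaction between $S$ and its complement using $\lambda_{min}$. Concretely, I would decompose ${\bf 1}_S$ into its component along the all-ones vector ${\bf 1}_V$ (the eigenvector for $r$) and the orthogonal remainder $g={\bf 1}_S-\frac{|S|}{|V|}{\bf 1}_V$. Writing $(M{\bf 1}_S,{\bf 1}_S)=(Mg,g)+r\frac{|S|^2}{|V|}$ and using $(Mg,g)\ge\lambda_{min}(g,g)=\lambda_{min}|S|\bigl(1-\frac{|S|}{|V|}\bigr)$ gives a lower bound for $\sigma(S)|S|$ in terms of $|S|$, $|V|$, $r$ and $\lambda_{min}$.

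First I would write out this Rayleigh-quotient inequality cleanly. Since $g\perp{\bf 1}_V$, it lies in the span of the eigenspaces for eigenvalues $\le\lambda_{max}'<r$, and for every such eigenvector $(Mg,g)\ge\lambda_{min}(g,g)$. Computing $(g,g)=|S|-|S|^2/|V|=|S|\frac{|V|-|S|}{|V|}$, I obtain
\begin{equation}\label{eq:rayleigh}
\sigma(S)|S|=(M{\bf 1}_S,{\bf 1}_S)\ge r\frac{|S|^2}{|V|}+\lambda_{min}|S|\frac{|V|-|S|}{|V|}.
\end{equation}
Dividing by $|S|$ and using the hypothesis $\sigma(S)\le a$ yields $a\ge r\frac{|S|}{|V|}+\lambda_{min}\frac{|V|-|S|}{|V|}$. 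Rearranging this linear inequality in $|S|/|V|$ (noting $r-\lambda_{min}>0$ for a regular graph) gives exactly $|S|\le\frac{(a-\lambda_{min})|V|}{r-\lambda_{min}}$, which is the claimed bound.

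For the equality case, I would trace the single inequality used, namely $(Mg,g)\ge\lambda_{min}(g,g)$. Equality here forces $g$ to lie entirely in the $\lambda_{min}$-eigenspace, so $g=Mg/\lambda_{min}$ is an eigenfunction of $M$ with eigenvalue $\lambda_{min}$. Then ${\bf 1}_S=g+\frac{|S|}{|V|}{\bf 1}_V$ differs from a multiple of ${\bf 1}_V$ by an eigenfunction, and by Proposition \ref{corol5} (applied with $c=|S|/|V|$) the function ${\bf 1}_S$ is a perfect $2$-coloring. The quotient matrix is then read off from Proposition \ref{propEP} with eigenvalue $\lambda=\lambda_{min}$: substituting $\lambda_{min}$ into that matrix and using the equality $\frac{|S|}{|V|}=\frac{a-\lambda_{min}}{r-\lambda_{min}}$ to simplify the entries should reproduce $\bigl(\begin{smallmatrix}a & r-a\\ a-\lambda_{min} & r+\lambda_{min}-a\end{smallmatrix}\bigr)$. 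Conversely, if ${\bf 1}_S$ is a perfect $2$-coloring with this quotient matrix, then $\sigma(S)=a$ and the eigenvalue relation holds with equality, so the bound is met.

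The main obstacle I anticipate is purely bookkeeping in the equality case: verifying that the abstract quotient matrix of Proposition \ref{propEP}, after the substitution $\lambda=\lambda_{min}$ and elimination of $|S|/|V|$, collapses to the stated matrix, and checking the converse direction consistently. The Rayleigh-quotient step itself is routine once ${\bf 1}_S$ is split along ${\bf 1}_V$; the only subtlety is ensuring connectedness (or at least that the $r$-eigenspace is one-dimensional) so that the orthogonal complement of ${\bf 1}_V$ really does have all eigenvalues $\ge\lambda_{min}$ and so that Proposition \ref{corol5} applies.
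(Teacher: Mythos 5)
Your proof is correct, including the equality-case bookkeeping: with $\lambda=\lambda_{min}$ and $|S|/|V|=(a-\lambda_{min})/(r-\lambda_{min})$, the matrix of Proposition \ref{propEP} does collapse to $\bigl(\begin{smallmatrix}a & r-a\\ a-\lambda_{min} & r+\lambda_{min}-a\end{smallmatrix}\bigr)$, and the converse follows by double counting the edges between $S$ and its complement. Note that this paper does not actually prove Theorem \ref{thPotAv} --- it is imported from \cite{PotAv} --- but your argument is exactly the spectral technique the paper uses for its own results: your Rayleigh inequality $(M{\bf 1}_S,{\bf 1}_S)\ge r|S|^2/|V|+\lambda_{min}|S|(1-|S|/|V|)$ is precisely the left-hand bound of Corollary \ref{haemers} (with $\lambda_k=\lambda_{min}$) combined with $\sigma(S)\le a$, and your equality analysis via Proposition \ref{corol5} mirrors the endgame of Lemma \ref{EMLclaim} and Theorem \ref{thEPPC}. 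One small refinement: the Rayleigh estimate $(Mg,g)\ge\lambda_{min}(g,g)$ holds for every vector regardless of connectivity or orthogonality to ${\bf 1}_V$, so connectedness is only needed in the equality step (to invoke Proposition \ref{corol5}), and there it can be handled componentwise, as the paper does in the proof of Theorem \ref{thEPPC}.
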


Denote by $\chi(G)$ the chromatic number of a graph $G$. For an
$r$-regular graph~$G$, the inequality $\chi(G)\geq
\frac{\lambda_{\mathrm{min}}-r}{\lambda_{\mathrm{min}}}$ is a
corollary of the Hoffman bound.
\begin{corollary}
Let $G=(V,E)$ be an $r$-regular graph. If
$k=\frac{\lambda_{\mathrm{min}}-r}{\lambda_{\mathrm{min}}}$, then
every proper  $k$-coloring of~$G$ is a perfect $k$-coloring.
\end{corollary}
\bproof By Theorem \ref{thPotAv}, the partition $\{C_i, V\setminus
C_i\}$ is  equitable, where  $C_i$ is the set of $i$-colored
vertices for $i=1,\dots,k$. Therefore, every vertex from $V\setminus
C_i$ has the same number of adjacent $i$-colored vertices. In
particular, every vertex from~$C_j$ has the same number of adjacent
$i$-colored vertices for each~$i\neq j$. It remains to note that a
vertex from $C_i$ has no $i$-colored neighbors, because the coloring
is proper. \proofend

\section{Expander Mixing Lemma and its corollaries}

Let $A$ and $B$ be arbitrary (not necessarily disjoint) nonempty
subsets of $V$. Consider the set $\{(a,b):\ a\in A,\  b\in B,\
\{a,b\}\in E(G)\}$ of all arcs that connect vertices from~$A$ to
vertices from~$B$. The cardinality of this set is denoted
by~$e(A,B)$. In particular, if $a,b\in A\cap B$, then the edge
$\{a,b\}$ is counted twice, as the arcs~$(a,b)$ and~$(b,a)$. It is
easy to see that
\begin{equation}\label{EMLeq}
e(A,B)=({\mathbf{1}}_{A}, M{\mathbf{1}}_{B}),
 \end{equation} where~$M$ is the
adjacency matrix of~$G$.

The Expander Mixing Lemma is proven in~\cite{AlonCh} and it appears
in a form appropriate  for us, for example, in~\cite{Dev} (Lemma 3).
Moreover, a proof of an improvement of the lemma can be found
in~\cite{Dev}. Below, we establish that subsets attaining bound
(\ref{EML}) correspond to a perfect $2$-coloring. To prove this, we
need to repeat the proof of the Expander Mixing Lemma.

\begin{lemma}[Expander Mixing Lemma]\label{EMLclaim}
Let  $G$ be an $r$-regular connected graph and let $\lambda$ be the
second largest, in absolute value, eigenvalue of~$G$ (if $G$ is
bipartite, then $\lambda = -r$). Then
\begin{equation}\label{EML}
     \left|e(A,B)-\frac{r|A||B|}{n}\right|\leq
     |\lambda|\sqrt{|A|\,|B| \left(1-\frac{|A|}{n} \right) \left(1-\frac{|B|}{n} \right)}.
\end{equation}
      Moreover,
      this bound is reached
     if and only if  $B=V\setminus A$ or $B=A$  and the partition $\{A,V\setminus
     A\}$ is equitable  with the eigenvalue $\lambda$.
     \end{lemma}
\bproof Let $\lambda_k<\lambda_{k-1}<\dots<\lambda_{1}<\lambda_0=r$
be the eigenvalues of~$G$. By definition $\lambda=\lambda_k$ or
$\lambda=\lambda_1$. Since $M$ is a symmetric matrix, the direct sum
of the eigenspaces of all of $M$'s eigenvalues is the entire vector
space. Consider the indicator functions~${\mathbf{1}}_{A}$
and~${{\mathbf{1}}}_{B}$ as linear combinations of eigenfunctions
of~$G$, i.\,e., ${\mathbf{1}}_{A}=\sum_i\alpha_i\phi_i$ and
${{\mathbf{1}}}_{B}=\sum_i\beta_i\phi'_i$, where~$\phi_i$
and~$\phi'_i$ correspond to the eigenvalue $\lambda_i$. Note that
$\phi_i$ and~$\phi'_i$ may be different. Without loss of generality,
we require that $\|\phi_i\|_2=1$ for all~$i$. We assume
$\lambda_0=r$, so the corresponding eigenfunction is
$\phi_0=\mathbf{1}/\sqrt{n}$.
 Therefore
$\alpha_0=({\mathbf{1}}_{A},\phi_0)=|A|/\sqrt{n}$ and $\beta_0=({
{\mathbf{1}}}_{B},\phi_0)=|B|/\sqrt{n}$.  Then
\begin{equation}\label{eqEx1}
({\mathbf{1}}_{A}-\alpha_0\phi_0,
M({\mathbf{1}}_{B}-\beta_0\phi_0))= \sum\limits_{i\neq
0}\lambda_i\alpha_i\beta_i(\phi_i,\phi'_i).
\end{equation}
By the Cauchy-Schwarz inequality, we obtain
\begin{equation}\label{eqEx2}
\Big|\sum\limits_{i\neq
0}\lambda_i\alpha_i\beta_i(\phi_i,\phi'_i)\Big|\leq
|\lambda|\sum\limits_{i\neq 0}|\alpha_i\beta_i|\leq
|\lambda|\bigg(\sum\limits_{i\neq 0}\alpha^2_i
\bigg)^{1/2}\bigg(\sum\limits_{i\neq 0}\beta^2_i \bigg)^{1/2}.
\end{equation}
By~(\ref{EMLeq}), it holds
\begin{multline}\label{eqEx3}
({\mathbf{1}}_{A}-\alpha_0\phi_0,
M({\mathbf{1}}_{B}-\beta_0\phi_0))=
\\=({\mathbf{1}}_{A}, M{\mathbf{1}}_{B})- \beta_0({\mathbf{1}}_{A}-\alpha_0\phi_0,
M\phi_0)-\alpha_0(M\phi_0, {\mathbf{1}}_{B}-\beta_0\phi_0)-
(\alpha_0\phi_0, M(\beta_0\phi_0)) \\
= e(A,B)-\frac{r|A||B|}{n}.
\end{multline}
From $({\mathbf{1}}_{A},{\mathbf{1}}_{A})= \sum_i\alpha^2_i$, we
derive
\begin{equation}\label{eqEx31}
\sum\limits_{i\neq 0}\alpha^2_i= ({\mathbf{1}}_{A},{
{\mathbf{1}}}_{A})-\alpha^2_0=|A|- |A|^2/n.
\end{equation}
A similar equation takes place for $B$. Summarizing
(\ref{eqEx1})--(\ref{eqEx3}), we get
\begin{multline*}
 \left|e(A,B)-\frac{r|A||B|}{n}\right|=
|({\mathbf{1}}_{A}-\alpha_0\phi_0,
M({\mathbf{1}}_{B}-\beta_0\phi_0))| \leq |\lambda|\sqrt{(|A|-
|A|^2/n)(|B|- |B|^2/n)}.
\end{multline*}

In order to obtain the equality in the Cauchy-Schwarz inequality, it
is necessary vector ${\mathbf{1}}_{A}-\alpha_0\phi_0$ to be
collinear to the vector ${\mathbf{1}}_{B}-\beta_0\phi_0$. So, we
obtain that  $\phi_i=\phi'_i$ and $\alpha_i=\beta_i$ or $
\alpha_i=-\beta_i$  for all~$i\neq 0$. Moreover, in order to obtain
equality in the first inequality of~(\ref{eqEx2}), it is necessary
the function ${\mathbf{1}}_{B}-\beta_0\phi_0$ to be be an
eigenfunction with the eigenvalue~$\pm \lambda$. By
Proposition~\ref{corol5}, we see that ${\mathbf{1}}_{B}$ is a
perfect $2$-coloring. If the vector
${\mathbf{1}}_{A}-\alpha_0\phi_0$ has the same direction as ${
{\mathbf{1}}}_{B}-\beta_0\phi_0$, then $B=A$. If
${\mathbf{1}}_{A}-\alpha_0\phi_0$ has the  direction opposite  to
${\mathbf{1}}_{B}-\beta_0\phi_0$, then $A=V\setminus B$.

Let $\{A,V\setminus     A\}$ be an equitable partition with the
eigenvalue $\lambda$. Consider the case $B=V\setminus A$. We have
$1-\frac{|A|}{n}=\frac{|B|}{n}$ and
$|A|-\frac{|A|^2}{n}=\frac{|A||B|}{n}=|B|-\frac{|B|^2}{n}$. From the
quotient matrix of the equitable partition (see
Proposition~\ref{propEP}), we derive
$e(A,B)=\frac{(r-\lambda)|A||B|}{n}$. The equality in~(\ref{EML}) is
now straightforward. The second case $A=B$ is similar. \proofend

As stated in \cite{Dev} inequality (\ref{EML}) is one of the form of
Cheeger's bound. Let $A\subset V$ and $B=V\setminus A$. The set of
all edges that connect $A$ and $B$ is called a cut-set. The
cardinality $e(A,B)$ of the cut-set is called the cut size.

\begin{corollary}[\cite{Golubev}]\label{cut}
Let $G$ be an $r$-regular connected graph and let
$\lambda_k<\lambda_{k-1}<\dots<\lambda_{1}<\lambda_0=r$ be the
eigenvalues of~$G$. Then for any nonempty $A\subset V$ and
$B=V\setminus A$, it holds
$$ \frac{(r-\lambda_1)|A||B|}{n}\leq e(A,B)\leq \frac{(r-\lambda_k)|A||B|}{n}.$$
 Moreover, one of
this two bounds is reached
     if and only if  $\{A,B\}$ is an equitable partition with the eigenvalue $\lambda_1$
     or $\lambda_k$ respectively.
\end{corollary}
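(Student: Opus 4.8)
The plan is to derive Corollary \ref{cut} as a direct specialization of the Expander Mixing Lemma (Lemma \ref{EMLclaim}) to the case $B=V(G)\setminus A$. First I would substitute $|B|=|V(G)|-|A|$ into the square root on the right-hand side of the lemma. Writing $n=|V(G)|$, the factor $(1-|A|/n)(1-|B|/n)$ becomes $(|B|/n)(|A|/n)$, so the product under the radical is $|A||B|\cdot|A||B|/n^2=(|A||B|/n)^2$, and the square root collapses to exactly $|A||B|/n$. Thus the two-sided absolute-value bound of the lemma becomes
\begin{equation}\label{cutproof1}
\left|e(A,B)-\frac{r|A||B|}{n}\right|\leq |\lambda|\frac{|A||B|}{n},
\end{equation}
where $\lambda$ is the second-maximum-modulus eigenvalue.

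The second step is to replace the single ``second maximum module eigenvalue'' $\lambda$ by the two one-sided extremal eigenvalues $\lambda_1$ and $\lambda_k$. The quantity $(M({\bf 1}_A-\alpha_0\phi_0),{\bf 1}_A-\alpha_0\phi_0)=\sum_{i\neq 0}\lambda_i\alpha_i^2$ (taking $B=A$ in the inner product of the lemma's proof, which equals $e(A,B)-r|A||B|/n$ after the same collapse as above) is a convex combination of the eigenvalues $\lambda_1,\dots,\lambda_k$ with weights $\alpha_i^2/\sum_{j\neq 0}\alpha_j^2$. Hence it lies between $\lambda_k\sum_{i\neq 0}\alpha_i^2$ and $\lambda_1\sum_{i\neq 0}\alpha_i^2$. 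By \eqref{eqEx31} we have $\sum_{i\neq 0}\alpha_i^2=|A|-|A|^2/n=|A||B|/n$, which yields
\begin{equation}\label{cutproof2}
\frac{\lambda_k|A||B|}{n}\leq e(A,B)-\frac{r|A||B|}{n}\leq \frac{\lambda_1|A||B|}{n}.
\end{equation}
Rearranging \eqref{cutproof2} gives the asserted two-sided bound $\frac{(r-\lambda_1)|A||B|}{n}\leq e(A,B)\leq\frac{(r-\lambda_k)|A||B|}{n}$; note the largest non-principal eigenvalue $\lambda_1$ produces the lower cut-size bound and the smallest eigenvalue $\lambda_k$ produces the upper one.

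For the equality characterization, I would invoke the fact that \eqref{cutproof2} is an endpoint equality exactly when all the weight $\alpha_i^2$ is concentrated on eigenvectors for the single eigenvalue $\lambda_1$ (respectively $\lambda_k$). In that case ${\bf 1}_A-\alpha_0\phi_0$ is itself an eigenfunction of $M$ with eigenvalue $\lambda_1$ (resp.\ $\lambda_k$), and Proposition \ref{corol5} then forces ${\bf 1}_A$ to be a perfect $2$-coloring, i.e.\ $\{A,B\}$ is an equitable partition with that eigenvalue. Conversely, if $\{A,B\}$ is equitable with eigenvalue $\lambda_1$ or $\lambda_k$, Proposition \ref{propEP} gives $e(A,B)=(r-\lambda)|A||B|/n$ directly. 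The main thing to be careful about is the sign bookkeeping when matching the minimum eigenvalue $\lambda_k$ (which may be negative or even $-r$ in the bipartite case) to the correct endpoint of the bound, and confirming that the $|\lambda|$ in the original lemma corresponds to $\max(|\lambda_1|,|\lambda_k|)$ so that the refined two-sided statement \eqref{cutproof2} is genuinely sharper than the absolute-value form \eqref{cutproof1}; this refinement is precisely why one tracks $\lambda_1$ and $\lambda_k$ separately rather than their common modulus.
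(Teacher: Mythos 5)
Your overall route is the same as the paper's: specialize the spectral decomposition from the proof of Lemma \ref{EMLclaim} to $B=V(G)\setminus A$, bound the non-principal part of the quadratic form between $\lambda_k$ and $\lambda_1$ times $\sum_{i\neq 0}\alpha_i^2=|A||B|/n$ (where $n=|V(G)|$), and settle equality via Proposition \ref{corol5}, with Proposition \ref{propEP} for the converse. However, your derivation contains a genuine sign error at its central step. You identify $({\bf 1}_A-\alpha_0\phi_0,M({\bf 1}_A-\alpha_0\phi_0))=\sum_{i\neq 0}\lambda_i\alpha_i^2$ with $e(A,B)-r|A||B|/n$; in fact that quadratic form equals $e(A,A)-r|A|^2/n$, which is the \emph{negative} of $e(A,B)-r|A||B|/n$, because with $B=V(G)\setminus A$ one has ${\bf 1}_B-\beta_0\phi_0=-({\bf 1}_A-\alpha_0\phi_0)$, equivalently $e(A,B)=r|A|-e(A,A)$. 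The correct identity, which is exactly what the paper derives from (\ref{eqEx1}), is
$$e(A,B)-\frac{r|A||B|}{n}=-\sum\limits_{i\neq 0}\lambda_i\alpha_i^2.$$
Consequently your second displayed inequality is false as written: it should read
$$-\frac{\lambda_1|A||B|}{n}\leq e(A,B)-\frac{r|A||B|}{n}\leq -\frac{\lambda_k|A||B|}{n},$$
whereas the version you wrote (with $+\lambda_k$ on the left and $+\lambda_1$ on the right) would rearrange to $(r+\lambda_k)|A||B|/n\leq e(A,B)\leq (r+\lambda_1)|A||B|/n$, not to the asserted bound. Your final statement comes out right only because the ``rearranging'' step commits a second, compensating sign error.

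The same flip infects the equality analysis. With the corrected identity, attaining the \emph{lower} bound $(r-\lambda_1)|A||B|/n$ forces the non-principal weight to concentrate on the $\lambda_1$-eigenspace, and attaining the upper bound $(r-\lambda_k)|A||B|/n$ forces concentration on the $\lambda_k$-eigenspace; this is the pairing the corollary asserts and that you state in your conclusion, but it does not follow from your displayed inequality, where the roles of the two endpoints are exchanged. Once the minus sign is restored, the rest of your argument (concentration on a single eigenspace makes ${\bf 1}_A-\alpha_0\phi_0$ an eigenfunction, Proposition \ref{corol5} gives the equitable partition, Proposition \ref{propEP} gives the converse) is sound and coincides with the paper's proof. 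Ironically, this is precisely the ``sign bookkeeping'' you flagged as the main danger: the fix is a one-line correction, but as written the chain of inequalities does not prove the statement.
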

\bproof We will use the notation from the proof of Lemma
\ref{EMLclaim}. Since
${\mathbf{1}}_{B}={\mathbf{1}}_{V}-{\mathbf{1}}_{A}={\mathbf{1}}_{V}-\sum_i\alpha_i\phi_i$,
(\ref{eqEx1}) is equivalent to the equation
$$({\mathbf{1}}_{A}-\alpha_0\phi_0, M({\mathbf{1}}_{B}-\beta_0\phi_0))=
-\sum\limits_{i\neq 0}\lambda_i\alpha_i^2.$$  By the hypothesis of
the corollary, it holds
$$-\lambda_1\sum\limits_{i\neq 0}\alpha_i^2\leq -\sum\limits_{i\neq
0}\lambda_i\alpha_i^2\leq -\lambda_k\sum\limits_{i\neq
0}\alpha_i^2.$$ Utilizing (\ref{eqEx3}) and (\ref{eqEx31}), we
obtain the required inequality. The left (or right) side of the
inequality holds with equality if and only if
${\mathbf{1}}_{A}-\alpha_0\phi_0$ is an eigenfunction of
 $G$ with eigenvalue
$\lambda_1$ (or $\lambda_k$ respectively). In these cases, the
partition $\{A,B\}$ is  equitable  by Proposition \ref{corol5}.
\proofend

By Corollary \ref{cut}, we obtain that the maximum cut size
$(r-\lambda_k)n/4$ corresponds to an equitable partition with the
minimum eigenvalue.

As mentioned above an indicator functions~ of each subset $C\subset
V$ is the linear combination ${\mathbf{1}}_{C}=\sum_i\phi_i$, where
$\phi_i$ is an eigenfunction of~$G$ with eigenvalue $\lambda_i$ and
$\lambda_k<\lambda_{k-1}<\dots<\lambda_{1}<\lambda_0=r$. In an
arbitrary regular graph $G = (V,E)$, a nonempty set $C$ of vertices
is called an {\it algebraic T-design} if $\phi_i=0$ as $i\in T$ in
the this decomposition (see \cite{Delsarte}).

\begin{corollary}\label{cut11}
Let $G$ be an $r$-regular connected graph and let
$\lambda_k<\lambda_{k-1}<\dots<\lambda_{1}<\lambda_0=r$ be the
eigenvalues of~$G$.

$(a)$ Let $T=\{j+1,\dots,k\}$. Then for any nonempty T-design
$C\subset V$, it holds
$$ e(C,V\setminus C)\leq \frac{(r-\lambda_j)|C|(n-|C|)}{n}.$$

$(b)$ Let $T=\{1,\dots,j-1\}$. Then for any nonempty T-design
$C\subset V$, it holds
$$ e(C,V\setminus C)\geq \frac{(r-\lambda_j)|C|(n-|C|)}{n}.$$

 Moreover, any of
this two bounds is reached
     if and only if  $\{C,V\setminus C\}$ is an equitable partition with the eigenvalue $\lambda_j$.
\end{corollary}

A proof of Corollary \ref{cut11} is similar to the proof of
Corollary \ref{cut}.

Let $G$ be the Hamming graph $H(k,q)$. It is well known that in this
case $\lambda_{i}=k(q-1)-iq$. We can consider $f={\mathbf{1}}_{C}$
as a $q$-ary Boolean-valued function in $k$ variables. If $C$ is an
algebraic T-design and $T=\{1,\dots,j\}$, then $f$ is a
correlation-immune function of order $j$ (see, e.g., \cite{Pot12},
Proposition 2). If $C$ is an algebraic T-design and
$T=\{j+1,\dots,k\}$, then $f$ has degree $j$ (see, e.g.,
\cite{Val24}). In the theory of Boolean functions the value
$e(C,V\setminus C)/n$ is called the {\it average sensitivity} of $f$
and  is denoted by $I(f)$. The value $|C|/n$ is denoted by
$\rho(f)$.

By Corollary \ref{cut11} we immediately obtain the following
statements.

\begin{corollary}[\cite{Pot12}, Theorem 1]\label{cut12}
Let $f$ be a $q$-ary Boolean-valued function in $k$ variables, and
let ${\rm cor}(f)$ be the maximal order of its correlation immunity.
Then it holds
\begin{equation}\label{eqEx90}
I(f)\geq q({\rm cor}(f)+1)\rho(f)(1-\rho(f)).
\end{equation}
 Moreover,  this bound is reached
     if and only if $f$ is a perfect $2$-coloring.
\end{corollary}

Note that (\ref{eqEx90}) coincides with the Bierbrauer bound
\cite{Bier} if $C$ is an independent set. Indeed, in this case
$I(f)=\rho(f)k(q-1)$, so $1-\rho(f)\leq \frac{k(q-1)}{q({\rm
cor}(f)+1)}$.

\begin{corollary}\label{cut13}
Let $f$ be a $q$-ary Boolean-valued function in $k$ variables with
degree $d$. Then it holds
$$ I(f)\leq qd\rho(f)(1-\rho(f)).$$
Moreover,  this bound is reached
     if and only if $f$ is a perfect $2$-coloring.
\end{corollary}

Corollary \ref{cut13} is similar to the result by Valyuzhenich
(\cite{Val24}, Corollary 1) in the case $\rho(f)=\frac12$.

 By the definition,  $e(C,C)$ is the
doubled number of pairs of adjacent vertices in
 $C\subset V$. The following inequalities are well known. We
 establish only the connection with equitable partitions.

\begin{corollary}[\cite{Haemers}]\label{haemers}
Let $G$ be an $r$-regular connected graph, let $C\subset V$, and let
$\lambda_k<\lambda_{k-1}<\dots<\lambda_{1}<\lambda_0=r$ be the
eigenvalues of~$G$.   Then it holds
$$ \lambda_k|C|+\frac{(r-\lambda_k)|C|^2}{n}\leq e(C,C)\leq
 \lambda_1|C|+\frac{(r-\lambda_1)|C|^2}{n}.$$
 Moreover, one of this
two bounds is reached
     if and only if  $\{C,V\setminus C\}$ is an equitable partition
      with the eigenvalue $\lambda_1$ or $\lambda_k$ respectively.
\end{corollary}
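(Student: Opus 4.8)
The plan is to derive Corollary \ref{haemers} directly from Corollary \ref{cut} by expressing $e(C,C)$ in terms of the cut size $e(C,V(G)\setminus C)$. The key identity is a local degree count: since $G$ is $r$-regular, every vertex of $C$ has exactly $r$ incident edge-ends, each landing either inside $C$ or in the complement. Summing over all vertices of $C$ gives
\begin{equation}\label{haemersid}
e(C,C)+e(C,V(G)\setminus C)=r|C|,
\end{equation}
where $e(C,C)$ counts each internal edge twice (matching the stated convention) and $e(C,V(G)\setminus C)$ is the cut size. Thus $e(C,C)=r|C|-e(C,V(G)\setminus C)$, and bounding $e(C,C)$ from above corresponds to bounding the cut size from below, and vice versa.

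First I would set $A=C$ and $B=V(G)\setminus C$ and invoke Corollary \ref{cut}, which gives $\frac{(r-\lambda_1)|C|\,|B|}{|V(G)|}\le e(C,B)\le \frac{(r-\lambda_k)|C|\,|B|}{|V(G)|}$. Next I would substitute $|B|=|V(G)|-|C|$ so that $\frac{|C|\,|B|}{|V(G)|}=|C|-\frac{|C|^2}{|V(G)|}$, turning the cut-size bounds into linear-plus-quadratic expressions in $|C|$. Plugging these into \eqref{haemersid} and rearranging, the upper cut bound $e(C,B)\le \frac{(r-\lambda_k)}{|V(G)|}|C|\,|B|$ yields
$$e(C,C)=r|C|-e(C,B)\ge r|C|-(r-\lambda_k)\Bigl(|C|-\tfrac{|C|^2}{|V(G)|}\Bigr)=\lambda_k|C|+\frac{(r-\lambda_k)|C|^2}{|V(G)|},$$
which is exactly the claimed lower bound; the upper bound follows identically from the lower cut bound using $\lambda_1$.

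For the equality characterization, I would note that the reduction \eqref{haemersid} is an exact identity with no inequality of its own, so $e(C,C)$ attains its lower (resp.\ upper) bound precisely when $e(C,B)$ attains its upper (resp.\ lower) bound. Corollary \ref{cut} already states that these cut-size extremes are reached if and only if $\{C,V(G)\setminus C\}$ is an equitable partition with eigenvalue $\lambda_k$ or $\lambda_1$ respectively, so the characterization transfers immediately. I do not anticipate a genuine obstacle here; the only point requiring care is bookkeeping the factor-of-two convention in $e(C,C)$ against the cut-size convention so that identity \eqref{haemersid} holds as stated, and confirming that the roles of $\lambda_1$ and $\lambda_k$ swap correctly under the sign change $e(C,C)\mapsto r|C|-e(C,B)$.
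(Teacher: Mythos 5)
Your proposal is correct, but it takes a genuinely different route from the paper. The paper proves Corollary \ref{haemers} directly from the spectral machinery of the Expander Mixing Lemma: it sets $A=B=C$ in formula (\ref{eqEx1}), so that $e(C,C)-\frac{r|C|^2}{|V(G)|}=\sum_{i\neq 0}\lambda_i\alpha_i^2$, sandwiches this sum between $\lambda_k\sum_{i\neq 0}\alpha_i^2$ and $\lambda_1\sum_{i\neq 0}\alpha_i^2$, and then invokes (\ref{eqEx3}), (\ref{eqEx31}) and Proposition \ref{corol5} for the equality characterization --- i.e.\ it runs the same eigenfunction-decomposition argument used for Corollary \ref{cut}, in parallel rather than via it. You instead treat Corollary \ref{cut} as a black box and transfer its conclusion through the exact handshake identity $e(C,C)+e(C,V(G)\setminus C)=r|C|$, which holds with your conventions since $({\bf 1}_C,M{\bf 1}_C)+({\bf 1}_C,M({\bf 1}_V-{\bf 1}_C))=({\bf 1}_C,rM{\bf 1}_V)/r=r|C|$; your algebra checks out, and because the identity involves no inequality the equality cases (including the swap of $\lambda_1$ and $\lambda_k$ under the sign reversal) carry over with no additional spectral argument. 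Your reading of the equality statement --- lower bound paired with eigenvalue $\lambda_k$, upper bound with $\lambda_1$ --- agrees with what the paper's own proof establishes, despite the slightly ambiguous ``respectively'' in the corollary's wording. What your reduction buys is brevity and the observation that Corollaries \ref{cut} and \ref{haemers} are literally equivalent statements under a linear change of variables; what the paper's parallel derivation buys is independence from the ordering of the corollaries and a template that also covers the non-complementary, non-equal choices of $A$ and $B$ handled by the lemma itself. Since Corollary \ref{cut} precedes Corollary \ref{haemers} in the paper, your dependency is legitimate and your proof is complete as written.
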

\bproof We will use the notation from the proof of Lemma
\ref{EMLclaim}. Put
 $B=A=C$. In this case,
(\ref{eqEx1}) is equivalent to the equation
$$({\mathbf{1}}_{C}-\alpha_0\phi_0, M({\mathbf{1}}_{C}-\alpha_0\phi_0))=
\sum\limits_{i\neq 0}\lambda_i\alpha_i^2.$$ By the hypothesis of the
corollary, it holds
$$\lambda_k\sum\limits_{i\neq 0}\alpha_i^2\leq \sum\limits_{i\neq
0}\lambda_i\alpha_i^2\leq \lambda_1\sum\limits_{i\neq
0}\alpha_i^2.$$ Utilizing (\ref{eqEx3}) and (\ref{eqEx31}), we
obtain the required inequality. The left (or right) side of the
inequality holds with  equality if and only if
${\mathbf{1}}_{C}-c\phi_0$ is an eigenfunction of
 $G$ with eigenvalue
$\lambda_k$ (or $\lambda_1$ respectively). In these cases, the
partition $\{C,V\setminus C\}$ is  equitable  by Proposition
\ref{corol5}. \proofend

\section[Perfect 2-colorings of amply regular graphs]{Perfect $2$-colorings of amply regular graphs}

A graph $G$ is called  amply regular  if the distance-$2$ adjacency
matrix $M_2(G)$ is a polynomial
\begin{equation}\label{e1.2}
M_2(G)=p(M)=p_2M^2+p_1M+p_0I
\end{equation}
 on the adjacency matrix $M$. It is easy to see that
any  amply regular graph is an $r_p$-regular, where $r_p=-p_0/p_2$.
Denote by $\sigma_2(S)$ the average number of vertices at
distance~$2$ in the set $S\subset V$, i.e.,
$\sigma_2(S)=(M_2(G){\mathbf{1}}_S,{\mathbf{1}}_S)/|S|$. Recall that
$\sigma(S)=(M{\mathbf{1}}_S,{\mathbf{1}}_S)/|S|$.

The following theorem  is true for any number of elements in an
equitable partition. We formulate the case of two elements in the
partition, which is  sufficient for our objectives.

\begin{theorem}[\cite{Krotov12}]\label{krotov} Let $G$ be an amply regular graph with
polynomial~$p$ and  let $S\subset V$. If $\sigma(S)= a$ and
$\sigma(V\backslash S)= d $, then $\sigma_2(S)\leq (p(Q))_{11}$ and
$\sigma_2(V\backslash S)\leq (p(Q))_{22} $, where $Q=\begin{pmatrix}
a & r_p-a \\
r_p-d & d
\end{pmatrix}$.
Moreover,  the inequalities both hold with equality if and only if
${\mathbf{1}}_S$ is a perfect $2$-coloring with quotient matrix~$Q$.
\end{theorem}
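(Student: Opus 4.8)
The plan is to run everything through the spectral decomposition of ${\bf 1}_S$. Fix an orthonormal eigenbasis $\phi_0,\phi_1,\dots$ of $M=M(G)$ with $M\phi_i=\lambda_i\phi_i$, $\lambda_0=r_p$ and $\phi_0={\bf 1}_V/\sqrt n$, $n=|V(G)|$, and write ${\bf 1}_S=\sum_i\alpha_i\phi_i$ with $\alpha_0=|S|/\sqrt n$. The three quantities I need are then spectral sums: $\sum_i\alpha_i^2=|S|$, $\sum_i\lambda_i\alpha_i^2=(M{\bf 1}_S,{\bf 1}_S)=a|S|$, and $\sigma_2(S)|S|=(p(M){\bf 1}_S,{\bf 1}_S)=\sum_i p(\lambda_i)\alpha_i^2$. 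First I would record what the two degree hypotheses actually impose. Using $M{\bf 1}_V=r_p{\bf 1}_V$ one gets the identity $(M{\bf 1}_{V\setminus S},{\bf 1}_{V\setminus S})=r_pn-2r_p|S|+(M{\bf 1}_S,{\bf 1}_S)$, so $\sigma(S)=a$ together with $\sigma(V\setminus S)=d$ forces the size ratio $t:=|S|/n=(r_p-d)/(2r_p-a-d)$, whence $1-t=(r_p-a)/(2r_p-a-d)$. A short computation then shows that the barycenter of the non-principal spectral mass is pinned exactly at the second eigenvalue of $Q$:
$$\frac{\sum_{i\neq0}\lambda_i\alpha_i^2}{\sum_{i\neq0}\alpha_i^2}=\frac{a-r_pt}{1-t}=a+d-r_p=:\lambda.$$

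Next I would identify $(p(Q))_{11}$. Since the row sums of $Q$ equal $r_p$, its eigenvalues are $r_p$ and $\lambda=a+d-r_p$, and because $p(Q)$ is a polynomial in the $2\times2$ matrix $Q$ it lies in $\mathrm{span}\{I,Q\}$; interpolating $p$ at the two eigenvalues and reading off the $(1,1)$ entry gives the clean splitting $(p(Q))_{11}=t\,p(r_p)+(1-t)\,p(\lambda)$. On the other hand, separating the principal term in the spectral sum yields $\sigma_2(S)=t\,p(r_p)+(1-t)\,\overline{p(\lambda_i)}$, where $\overline{p(\lambda_i)}$ denotes the $\alpha_i^2$-weighted average of $p(\lambda_i)$ over $i\neq0$. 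Subtracting, the entire theorem collapses to a single Jensen-type defect for the quadratic $p$:
$$\sigma_2(S)-(p(Q))_{11}=(1-t)\bigl(\overline{p(\lambda_i)}-p(\lambda)\bigr),$$
and since $\lambda$ is precisely the barycenter, this defect is for a degree-two $p$ nothing but $(1-t)\,p_2\,\overline{(\lambda_i-\lambda)^2}$, i.e. the leading coefficient times the variance of the non-principal spectral distribution.

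This identity already settles the equality analysis, which is the direction-free part of the statement: the defect vanishes iff the weighted variance is zero, iff $\alpha_i=0$ for every $i\neq0$ with $\lambda_i\neq\lambda$, iff ${\bf 1}_S-\alpha_0\phi_0$ is an eigenfunction with eigenvalue $\lambda$. By Proposition \ref{corol5} this is exactly the case where ${\bf 1}_S$ is a perfect $2$-coloring, and then Proposition \ref{propEP}, together with $t=(r_p-d)/(2r_p-a-d)$, forces its quotient matrix to be precisely $Q$. The statement for $V\setminus S$ follows verbatim: the coefficients of ${\bf 1}_{V\setminus S}$ differ from those of ${\bf 1}_S$ only in sign for $i\neq0$, so the same variance controls $\sigma_2(V\setminus S)-(p(Q))_{22}$.

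The hard part will be the \emph{direction} of the inequality, and it is concentrated entirely in the sign of that variance term. The value $(p(Q))_{11}$ is an honest extremum of $\sigma_2(S)$ under the two degree constraints, but which extremum is governed solely by $\mathrm{sign}(p_2)$, since $\overline{(\lambda_i-\lambda)^2}\ge0$ regardless. For an amply regular graph the relation $M^2=r_pI+\lambda_{\mathrm{adj}}M+\mu M_2$ forces $p_2=1/\mu$, so I would track this sign with great care. The tempting route through a Cauchy--Schwarz / second-moment estimate only ever bounds $\sum_{i\neq0}\lambda_i^2\alpha_i^2$ from one side, and therefore pins the variance --- hence the comparison with $(p(Q))_{11}$ --- from the wrong side, replacing $p(\lambda)$ by an interpolation through the extreme eigenvalues $\lambda_{\min},\lambda_1$ whose equality case is \emph{not} the perfect-coloring condition. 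The decisive step is precisely that the barycenter lands exactly at $\lambda=a+d-r_p$, so that the comparison is genuinely with $p(\lambda)$; this is what anchors the extremal configurations to equitable partitions with quotient matrix $Q$, and it is where I would concentrate the verification.
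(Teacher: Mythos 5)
First, a point of comparison you could not have known: the paper contains no proof of Theorem \ref{krotov} at all --- it is quoted from \cite{Krotov11} --- so your argument stands entirely on its own. On its own terms, your computations all check out: $\sigma(S)=a$ together with $\sigma(V\setminus S)=d$ does pin $t=|S|/n=(r_p-d)/(2r_p-a-d)$; the non-principal barycenter does equal $\lambda=a+d-r_p$, the second eigenvalue of $Q$; the interpolation $(p(Q))_{11}=t\,p(r_p)+(1-t)\,p(\lambda)$ is correct (since $(a-\lambda)/(r_p-\lambda)=t$); and the resulting defect identity $\sigma_2(S)-(p(Q))_{11}=(1-t)\,p_2\,\overline{(\lambda_i-\lambda)^2}$, together with its twin $\sigma_2(V\setminus S)-(p(Q))_{22}=t\,p_2\,\overline{(\lambda_i-\lambda)^2}$, is exactly the right engine: it shows the two defects are proportional to one common variance, so equality in either inequality forces equality in both, and your equality analysis via Propositions \ref{corol5} and \ref{propEP} correctly yields the perfect $2$-coloring with quotient matrix precisely $Q$ (two small caveats: Proposition \ref{corol5} requires $G$ connected, so argue componentwise as in Theorem \ref{thEPPC}, and one must have $S\neq\emptyset,V$ so that ${\bf 1}_S-\alpha_0\phi_0\neq 0$).

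The one step you deferred --- the sign of $p_2$ --- is, however, not a verification you can discharge, because it exposes a defect in the statement as printed. Comparing entries of $M_2=p_2M^2+p_1M+p_0I$ at any distance-$2$ pair forces $p_2=1/\mu>0$ (all such pairs have the same number $\mu\geq 1$ of common neighbors), and the paper itself tacitly uses $p_2>0$ when it divides by $p_2$ in the proof of Theorem \ref{thEPPC}. With $p_2>0$ your identity proves $\sigma_2(S)\geq (p(Q))_{11}$, the \emph{reverse} of the displayed inequality. This is not an artifact of your method: take $G=C_4$, which is amply regular with $p(x)=\tfrac12x^2-1$, $r_p=2$, and let $S$ be a single vertex; then $a=0$, $d=4/3$, $t=1/4$, $\lambda=-2/3$, so $(p(Q))_{11}=\tfrac14p(2)+\tfrac34p(-2/3)=-1/3$, while $\sigma_2(S)=0>-1/3$. (The same example breaks the intermediate step $\sigma_2(S)\leq p_2\bigl(\tfrac{r^2|S|}{|V|-|S|}-r\bigr)$ in the paper's proof of Corollary \ref{corExt}.) So the $\leq$ in Theorem \ref{krotov} must be a sign or transcription slip relative to \cite{Krotov11}; the true statement, which your identity proves in full, is the lower bound $\sigma_2(S)\geq (p(Q))_{11}$ and $\sigma_2(V\setminus S)\geq (p(Q))_{22}$, with equality exactly at perfect $2$-colorings with quotient matrix $Q$. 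Your closing observation is also apt: a Cauchy--Schwarz or extreme-eigenvalue second-moment bound would interpolate through $\lambda_{\min}$ and $\lambda_1$ and lose the equitable-partition equality case, which is precisely why the exact barycenter value $\lambda=a+d-r_p$ (and, in Theorem \ref{thEPPC}, the free parameter $\theta$) is the load-bearing ingredient.
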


By Theorem \ref{krotov}, we can easy obtain the following criterium
for perfect $2$-colorings with the minimum eigenvalue in amply
graphs.

\begin{corollary}\label{corExt}
Let $G$ be an amply $r$-regular graph with polynomial~$p$  and let
$S\subset V$ be an independent set.   Then $\sigma_2(S)\leq
-p_2r(\lambda_{\mathrm{min}}+1)$, where~$\lambda_{\mathrm{min}}$ is
the minimum eigenvalue of~$G$. Moreover, $\sigma_2(S)=
-p_2r(\lambda_{\mathrm{min}}+1)$ if and only if ${\mathbf{1}}_S$ is
a perfect $2$-coloring with the eigenvalue~$\lambda_{\mathrm{min}}$.
\end{corollary}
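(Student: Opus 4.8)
The plan is to reduce everything to the two results already proved: Krotov's distance-$2$ bound (Theorem \ref{krotov}) supplies the upper estimate $\sigma_2(S)\le (p(Q))_{11}$, while the Hoffman-type bound (Theorem \ref{thPotAv}) controls the free parameter in $Q$. Since $S$ is independent, $\sigma(S)=(M{\bf 1}_S,{\bf 1}_S)/|S|=0$, so I take $a=0$. Writing $n=|V(G)|$, $s=|S|$ and $d=\sigma(V(G)\setminus S)$, the first step is a double-counting identity: every edge meeting $S$ runs into $V(G)\setminus S$, so $e(S,V(G)\setminus S)=rs$, and expanding $(M{\bf 1}_V,{\bf 1}_V)=rn$ along ${\bf 1}_V={\bf 1}_S+{\bf 1}_{V\setminus S}$ gives $rn=2rs+d(n-s)$, i.e. $d=r(n-2s)/(n-s)$. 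Consequently the second eigenvalue of $Q=\begin{pmatrix}0 & r\\ r-d & d\end{pmatrix}$ is $\lambda=d-r=-rs/(n-s)$.

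Next I apply Theorem \ref{krotov} with this $Q$. A direct computation gives $(Q^2)_{11}=r(r-d)$, whence $(p(Q))_{11}=p_2r(r-d)+p_0$; using $r=r_p=-p_0/p_2$, i.e. $p_0=-p_2r$, this simplifies to $(p(Q))_{11}=-p_2r(\lambda+1)$, so $\sigma_2(S)\le -p_2r(\lambda+1)$. To finish the inequality I need $\lambda\ge\lambda_{min}$, and this is exactly where the Hoffman bound enters: the inequality $-rs/(n-s)\ge\lambda_{min}$ rearranges to $s\le -\lambda_{min}n/(r-\lambda_{min})$, which is Theorem \ref{thPotAv} applied to the independent set $S$ with $a=0$. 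A common-neighbour count shows $p_2>0$ (any pair at distance $2$ forces $1=p_2\mu$ with $\mu\ge1$), so multiplying $\lambda\ge\lambda_{min}$ by $-p_2r<0$ reverses the inequality and yields $-p_2r(\lambda+1)\le -p_2r(\lambda_{min}+1)$, proving $\sigma_2(S)\le -p_2r(\lambda_{min}+1)$.

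For the equality characterisation, both inequalities in the chain $\sigma_2(S)\le -p_2r(\lambda+1)\le -p_2r(\lambda_{min}+1)$ must be tight. The second forces $\lambda=\lambda_{min}$, hence $s=-\lambda_{min}n/(r-\lambda_{min})$, so $S$ attains the Hoffman bound; by the equality clause of Theorem \ref{thPotAv} this happens iff ${\bf 1}_S$ is a perfect $2$-coloring whose quotient matrix $\begin{pmatrix}0 & r\\ -\lambda_{min} & r+\lambda_{min}\end{pmatrix}$ has second eigenvalue $\lambda_{min}$. Conversely, if ${\bf 1}_S$ is a perfect $2$-coloring with eigenvalue $\lambda_{min}$, then independence gives $a=0$ and the quotient matrix is precisely this $Q$ with $\lambda=\lambda_{min}$, so Theorem \ref{krotov} gives $\sigma_2(S)=(p(Q))_{11}=-p_2r(\lambda_{min}+1)$.

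I expect the main obstacle to be recognising that the auxiliary eigenvalue $\lambda=d-r$ produced by Krotov's bound is governed by the Hoffman bound. The double-counting identity $d=r(n-2s)/(n-s)$ is the bridge that turns the needed inequality $\lambda\ge\lambda_{min}$ into the statement of Theorem \ref{thPotAv}, and keeping the sign of $p_2$ straight is what makes the two tight cases line up on the minimum eigenvalue.
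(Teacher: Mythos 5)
Your proof is correct and follows essentially the same route as the paper: since $S$ is independent, take $a=0$, compute $d=\sigma(V(G)\setminus S)$ by double counting, apply Theorem \ref{krotov} with $Q=\begin{pmatrix}0 & r\\ r-d & d\end{pmatrix}$ to get $\sigma_2(S)\le (p(Q))_{11}=-p_2r(\lambda+1)$, and then invoke the Hoffman bound (Theorem \ref{thPotAv}) to pass from $\lambda=d-r$ to $\lambda_{min}$. If anything, you are more careful than the paper's own proof, which leaves the equality characterisation and the positivity of $p_2$ implicit.
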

\bproof It is clear that  $\sigma(S)=0$ and $\sigma(V\backslash S)=
r(1-\frac{|S|}{n-|S|})$ for each independent set~$S$. It is easy to
find that $(p(Q))_{11}=p_2(r(r-d)-r)$. By Theorem~\ref{krotov}, it
holds $\sigma_2(S)\leq p_2(\frac{r^2|S|}{n-|S|}-r)$. By the Hoffman
bound, we get $\frac{r^2|S|}{n-|S|}\leq -\lambda_{\mathrm{min}}r$.
Thus $\sigma_2(S)\leq -p_2r(\lambda_{\mathrm{min}}+1)$. If
$\sigma_2(S)= -p_2r(\lambda_{\mathrm{min}}+1)$ then ${\mathbf{1}}_S$
is a perfect $2$-coloring by Theorem \ref{thPotAv}. For any perfect
$2$-coloring with the eigenvalue~$\lambda_{\mathrm{min}}$, the
equality $\sigma_2(S)= -p_2r(\lambda_{\mathrm{min}}+1)$ is
straightforward. \proofend

Suppose that the adjacency matrix $M$ satisfies (\ref{e1.2}). Then
by the definition of a perfect $2$-coloring, it is possible to count
the number of vertices from $C_j$ at~distance~$2$ from any vertex
from~$C_i$. If the quotient matrix of the perfect $2$-coloring is
equal to $\begin{pmatrix}
a & b\\
c & d
\end{pmatrix}
$, then there hold $(M{\mathbf 1}_{C_1},{\mathbf 1}_{C_1})=a|C_1|$,
$(M_2{\mathbf{1}}_{C_1},{\mathbf{1}}_{C_1})=(p_2(a^2+bc)+p_1a+p_0)|C_1|$,
and $|C_1|=\frac{cn}{b+c}$. Let
$\beta=\sigma_2(C_1)=p_2(a^2+bc)+p_1a+p_0$. It is clear that
$\frac{c}{b+c}=\frac{bc}{b^2+bc}=\frac{\beta-p(a)}{p_2b^2+\beta-p(a)}$.
Thus we  obtain that $|C|=
\frac{(\beta-p(a))n}{p_2(r_p-a)^2+\beta-p(a)} $ for any perfect
$2$-coloring ${\mathbf{1}}_C$ of  $G$.

Next we prove that a fixed $\sigma(C)$ and a bounded $\sigma_2(C)$
provide an upper bound   for the cardinality of $C$. Moreover, if
this upper bound is reached on $C$ then the partition
$\{C,V\setminus C\}$ is  equitable.

\begin{theorem}\label{thEPPC}Let $G$ be an amply $r$-regular graph with polynomial
$p$ and let $C\subset V$. If  $\sigma(C)=a$ and $\beta=\sigma_2(C)$
then $|C|\leq \frac{(\beta-p(a))n}{p_2(r-a)^2+\beta-p(a)} $.
Moreover, if $|C|= \frac{(\beta-p(a))n}{p_2(r-a)^2+\beta-p(a)} $
then
 ${\mathbf{1}}_C$ is a perfect $2$-coloring of~$G$.\end{theorem}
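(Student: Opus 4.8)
The plan is to pass to the spectral side and track three moments of $\mathbf 1_C$. Fix an orthonormal eigenbasis $\{\phi_i\}$ of $M=M(G)$ with eigenvalues $\lambda_i$, normalised so that $\phi_0=\mathbf 1/\sqrt n$ (with $\lambda_0=r$), $n=|V(G)|$, and write $\mathbf 1_C=\sum_i\alpha_i\phi_i$, so that $\alpha_0=|C|/\sqrt n$. I would record the three quantities $S_0=\sum_{i\neq0}\alpha_i^2$, $S_1=\sum_{i\neq0}\lambda_i\alpha_i^2$ and $S_2=\sum_{i\neq0}\lambda_i^2\alpha_i^2$. Parseval gives $S_0=|C|-|C|^2/n$; the hypothesis $\sigma(C)=a$, i.e. $(M\mathbf 1_C,\mathbf 1_C)=a|C|$, gives after removing the $i=0$ term $S_1=|C|(a-r|C|/n)$; and since $M_2=p(M)$, the hypothesis $\sigma_2(C)\le\beta$ reads $\sum_i p(\lambda_i)\alpha_i^2\le\beta|C|$, which after removing the $i=0$ contribution $p(r)|C|^2/n$ becomes $p_2S_2+p_1S_1+p_0S_0\le |C|\bigl(\beta-p(r)|C|/n\bigr)$.

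The bridge is the Cauchy--Schwarz inequality $S_1^2\le S_0S_2$. For an amply regular graph one has $p_2=1/\mu>0$, where $\mu\ge1$ is the (constant) number of common neighbours of two vertices at distance $2$; hence I may replace $S_2$ by its lower bound $S_1^2/S_0$ and preserve the inequality, obtaining $p_2 S_1^2/S_0+p_1S_1+p_0S_0\le|C|\bigl(\beta-p(r)|C|/n\bigr)$. Substituting the closed forms of $S_0$ and $S_1$, dividing by $|C|>0$ and writing $x=|C|/n$, this collapses to the single-variable inequality
$$(1-x)\,p\!\left(\tfrac{a-rx}{1-x}\right)\le \beta-x\,p(r).$$

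To recognise the stated bound I would verify the identity
$$(1-x)\,p\!\left(\tfrac{a-rx}{1-x}\right)+x\,p(r)=p(a)+p_2\,\frac{x(r-a)^2}{1-x},$$
which holds because the constant and linear parts collapse to $p_1a+p_0$ (using $(1-x)\tfrac{a-rx}{1-x}+xr=a$) and the quadratic part simplifies, the cross terms cancelling to leave $x(r-a)^2$ in the numerator. The inequality then reads $p_2\,x(r-a)^2/(1-x)\le\beta-p(a)$; since its left side is nonnegative this incidentally forces $\beta-p(a)\ge0$, and solving the resulting linear inequality in $x$ yields $x\le(\beta-p(a))/\bigl(p_2(r-a)^2+\beta-p(a)\bigr)$, i.e. the claimed bound on $|C|$.

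For the equality case, note that the only genuine inequalities used are Cauchy--Schwarz and the hypothesis $\sigma_2(C)\le\beta$, while every algebraic step is reversible (I only multiplied and divided by the positive quantities $|C|$ and $1-x$). Thus $|C|$ attains the bound exactly when $S_1^2=S_0S_2$, which happens iff all $\lambda_i$ with $i\neq0$ and $\alpha_i\neq0$ coincide, i.e. $\mathbf 1_C-\alpha_0\phi_0$ is an eigenfunction of $G$; by Proposition \ref{corol5} this is precisely the assertion that $\mathbf 1_C$ is a perfect $2$-coloring. I expect the only real obstacle to be bookkeeping: making sure Cauchy--Schwarz is applied in the direction permitted by $p_2>0$, and carrying out the simplification of $(1-x)\,p(\tfrac{a-rx}{1-x})$ cleanly enough to land on the stated closed form rather than on an unrecognizable rational expression. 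Conceptually nothing new beyond the eigenbasis/moment method of Lemma \ref{EMLclaim} is needed: the second-moment constraint together with Cauchy--Schwarz is saturated exactly on perfect $2$-colorings.
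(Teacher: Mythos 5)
Your proposal is correct and follows essentially the same route as the paper: the same eigenbasis decomposition, the same three moment identities (Parseval, the first moment from $\sigma(C)=a$, the second moment from $\sigma_2(C)\le\beta$ via $M_2=p(M)$), and the same equality analysis through Proposition \ref{corol5}; your Cauchy--Schwarz step $S_1^2\le S_0S_2$ is exactly the optimized form of the paper's free-parameter combination ${\rm (III)}-2\theta\,{\rm (II)}+\theta^2\,{\rm (I)}$ together with $\sum_{i\neq 0}\alpha_i^2(\lambda_i-\theta)^2\ge 0$. The only omission is the paper's one-line reduction to connected $G$, which is needed before invoking Proposition \ref{corol5}.
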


\bproof Without loss of generality, we suppose that  $G$ is
connected. In the other case,  we can prove the theorem separately
for each component of connectivity. Consider ${ {\mathbf{1}}}_{C}$
as a linear combination of eigenfunctions of $G$. It holds
${\mathbf{1}}_{C}=\sum_i\alpha_i\phi_i$, where $\phi_i$ is an
eigenfunction of~$M$ with   eigenvalue~$\lambda_i$. Without loss of
generality, we assume  $\|\phi_i\|_2=1$ for all~$i$. The
eigenfunction with   eigenvalue~$r$ is equal to
$\phi_0=\mathbf{1}_V/\sqrt{n}$.  From $({
{\mathbf{1}}}_{C},{\mathbf{1}}_{C})= \sum_i\alpha^2_i$ we derive
$$
{\rm (I)} \qquad \sum\limits_{i\neq 0}\alpha^2_i=
({\mathbf{1}}_{C},{ {\mathbf{1}}}_{C})-\alpha^2_0=|C|- \varrho|C|,
$$
 where $\varrho=|C|/n$. From $(M{\mathbf{1}}_{C},{\mathbf{1}}_{C})=
 a|C|$ and $
(M{\mathbf{1}}_{C},{\mathbf{1}}_{C})=\sum_i\alpha^2_i\lambda_i $, it
follows
$$
{\rm (II)} \quad r\varrho|C|+\sum\limits_{i\neq
0}\alpha^2_i\lambda_i= a|C|.$$ From (\ref{e1.2}) and  the hypothesis
of the theorem we obtain
$$(M^2{\mathbf{1}}_{C},{\mathbf{1}}_{C})=
\frac{1}{p_2}((M_2-p_1M-p_0I){\mathbf{1}}_{C},{\mathbf{1}}_{C})=
\frac{|C|}{p_2}(\beta-p_1a-p_0).  $$ Hence,
$$
{\rm (III)} \quad r^2\varrho|C|+\sum\limits_{i\neq
0}\alpha^2_i\lambda^2_i= \frac{|C|}{p_2}(\beta-p_1a-p_0).
$$

Let us combine the left and right parts of equations (I)--(III)
according to the following formula ${\rm (III)}-2\theta{\rm
(II)}+\theta^2{\rm (I)}$.  Then for any $\theta \in \mathbb{R}$ we
obtain the  inequalities
$$r^2\varrho|C|-2 r\varrho\theta|C|+\sum\limits_{i\neq
0}\alpha^2_i(\lambda_i-\theta)^2=
\frac{|C|}{p_2}(\beta-p_1a-p_0)-2a\theta|C|+|C|(1-\varrho)\theta^2,$$
\begin{equation}\label{eqEx8}
r^2\varrho-2 r\varrho\theta\leq
\frac{1}{p_2}(\beta-p_1a-p_0)-2a\theta+(1-\varrho)\theta^2,
\end{equation}
$$\varrho\leq\frac{\frac{1}{p_2}(\beta-p(a))+(a-\theta)^2}{(r-\theta)^2}.$$
Let $\theta=a-\frac{\beta-p(a)}{p_2(r-a)}$. Then we conclude that
$$\varrho\leq\frac{(a-\theta)(r-a)+(a-\theta)^2}{(r-a+a-\theta)^2}=\frac{a-\theta}{r-\theta}=   \frac{\beta-p(a)}{p_2(r-a)^2+\beta-p(a)}.$$

It is clear that (\ref{eqEx8}) holds with equality if and only if
$f=\phi+\alpha_0\varphi_0$, where $\phi$ is an eigenfunction with
eigenvalue $\theta$. By Proposition \ref{corol5} we obtain that
${\mathbf{1}}_{C}$ is a perfect $2$-coloring in this case. \proofend

 For $a=0$ and $\beta=0$, the new bound coincides
 with the
Hamming bound $|C|\leq \frac{n}{r+1} $. If $C$ is an independent
set, then $a=0$, $p(0)=p_0=-rp_2$ and $\varrho\leq
1/(1+\frac{p_2r^2}{\beta+p_2r})$. This bound and the Hoffman bound
are reached simultaneously on perfect $2$-colorings with the minimum
eigenvalue.  In this case, it holds $\sigma_2(C)=
-p_2r(\lambda_{\mathrm{min}}+1)$. By Corollary~\ref{corExt},  we
have $\sigma_2(C)\leq -p_2r(\lambda_{\mathrm{min}}+1)$ for any
independent set~$C$. Consequently, meaningfully consider only the
case of  $\beta< -p_2r(\lambda_{\mathrm{min}}+1)$.

\begin{corollary}\label{corExt2}
Let $C$ be an independent set  in an amply regular graph $G$ with
polynomial~$p$. If $\beta=\sigma_2(C)<
-p_2r(\lambda_{\mathrm{min}}+1)$, then the bound $\frac{|C|}{n}\leq
1/(1+\frac{p_2r^2}{\beta+p_2r})$ is the better than the Hoffman
bound.
\end{corollary}
\bproof
$1/(1+\frac{p_2r^2}{\beta+p_2r})<1/(1+\frac{p_2r^2}{-p_2r\lambda_{\mathrm{min}}})=\frac
{-\lambda_{\mathrm{min}}}{r-\lambda_{\mathrm{min}}}$. \proofend

\bigskip

{\bf Funding.} The work was funded by the Russian Science Foundation
(grant No 22-11-00266).

\end{document}

{\bf Data Availability} Not applicable.

{\bf Code Availability} Not applicable.

\section{Declarations}

{\bf Conflict of Interest}  The author declare that he has no
Conflict of interest.